\numberwithin{equation}{section}
\newtheorem{theorem}{Theorem}[section]
\newtheorem{lemma}{Lemma}[section]
\newtheorem{remark}{Remark}[section]
\newcommand{\ed}{\end {document}}
\newcommand{\R}{\mathbb{R}}
\begin{document}
\title[Helmholtz equation with arbitrary order]{\bf On the equivalence of classical Helmholtz equation and fractional Helmholtz equation with arbitrary order}

\author[X.Y. Cheng]{Xinyu Cheng}
\address{Xinyu Cheng, School of Mathematical Sciences, Fudan University, Shanghai, P.R. China.}
\email{xycheng@fudan.edu.cn}

\author[D. Li]{Dong Li}
\address{Dong Li, SUSTech International Center for Mathematics, and Department of Mathematics,  Southern University of Science and Technology, Shenzhen, P. R. China.}
\email{lid@sustech.edu.cn}

\author[W. Yang]{ Wen Yang}
\address{\noindent Wen ~Yang,~Wuhan Institute of Physics and Mathematics, Innovation Academy for Precision Measurement Science and Technology, Chinese Academy of Sciences, Wuhan 430071, P. R. China.}
\email{math.yangwen@gmail.com}

\renewcommand{\thefootnote}{\fnsymbol{footnote}}
	
\maketitle
	
{\noindent\small{\bf Abstract:}
We show the equivalence of the classical Helmholtz equation and the fractional Helmholtz
equation with arbitrary order. This improves a recent result of Guan, Murugan and Wei \cite{gmw2022}.

}
	
\vspace{1ex}

\section{Introduction}
In mathematical physics the classical eigenvalue problem for the Laplace operator is known as the Helmholtz equation. In a
prototypical setup one is interested in finding eigen-pairs to  the linear partial differential equation
\begin{equation} \label{a1}
-\Delta f= \lambda f
\end{equation}
with various boundary conditions.  The Helmholtz equation corresponds to the time-independent form of the wave equation as it naturally appears in reducing the complexities of the solution procedure by the usual separation of variable method.  The Helmholtz equation is widely used in a plethora of the physical and engineering applications such as heat conduction, acoustic radiation, water wave propagation and some related applied science. If one replaces the Laplacian operator by
the fractional Laplacian (i.e.
$(-\Delta)^s$ for some $s>0$) on the left hand side  of \eqref{a1}, then one obtains a so-called fractional version of the Helmholtz equation which
 also plays an important role in physics. Recently, starting from the Maxwell's equations, the article \cite{wva2020} derived a scalar fractional Helmholtz equation. There exists a deep connection between solutions to the classical Helmholtz equation and the fractional ones. In recent \cite{gmw2022},  Guan, Murugan and Wei considered the fractional Helmholtz equation
 posed in the whole space and obtained some equivalence of the corresponding solution to the classical Helmholtz equation
 for the regime $0<s<1$ under some decay assumptions at spatial infinity.  The purpose of this note is to prove an
 optimal Liouville type result for all $s>0$ without any additional decay assumptions.

Before describing the main result we fix some notation used throughout this note.
Let $s>0$.  For $u\in \mathcal
S(\mathbb R^d)$, $d\ge 1$, the fractional Laplacian $\Lambda^s u
=(-\Delta)^{\frac s2} u $
is defined via Fourier transform as
\begin{align}
\widehat{\Lambda^s u } (\xi) = |\xi|^s \widehat{u}(\xi), \qquad \xi \in \mathbb R^d.
\end{align}
Here we adopt the following convention for Fourier transform:
\begin{align}
& (\mathcal F u)(\xi) =\widehat u (\xi) = \int_{\mathbb R^d} u(y) e^{-iy\cdot \xi} dy ; \\
& u(x) = \frac 1 {(2\pi)^d} \int_{\mathbb R^d} \widehat{u}(\xi) e^{i \xi \cdot x} d\xi
=: (\mathcal F^{-1} \widehat u )(x).
\end{align}
For $f_1:\, \mathbb R^d \to \mathbb C$, $f_2:\, \mathbb R^d \to \mathbb C$,  $f_1$, $f_2$ Schwartz, we denote
the usual $L^2$ pairing:
\begin{align}
\langle f_1, f_2\rangle : = \int_{\mathbb R^d} f_1(x) \overline{f_2(x) } dx,
\end{align}
where $\overline{z}$ denotes the usual complex conjugate of $z\in \mathbb C$.  The usual
Plancherel formula reads
\begin{align}
\langle \widehat f_1, \widehat f_2 \rangle = (2\pi)^d \langle f_1, f_2 \rangle.
\end{align}
If we denote $f_3 =\widehat f_2$, then $f_2 = \mathcal F^{-1} (f_3)$. Thus for
$f_1$, $f_3 \in \mathcal S(\mathbb R^d)$, it holds that
\begin{align}
\langle \widehat f_1, f_3 \rangle = (2\pi)^d \langle f_1, \mathcal F^{-1} (f_3) \rangle.
\end{align}
For $u \in L^{\infty}(\mathbb R^d)$, $s>0$, we can define $\Lambda^s u \in \mathcal
S^{\prime}(\mathbb R^d)$ via the formula
\begin{align} \label{1.8a}
\langle \widehat{\Lambda^s u},  \phi \rangle 
& = (2\pi)^d \langle u, \mathcal F^{-1}( |\cdot|^s \phi )\rangle, \qquad \forall\, \phi \in \mathcal S(\mathbb R^d).
\end{align}

The main result of this note is the following.

\begin{theorem}[Equivalence of fractional Helmholtz and classical Helmholtz] \label{thm1}
Let $s>0$. Assume $u \in L^{\infty}(\mathbb R^d)$ satisfies $\Lambda^s u = u$ in $\mathcal S^{\prime}(\mathbb R^d)$. Then\footnote{More precisely $u$ can be identified as a $C^{\infty}(\mathbb R^d)$ function in the
spirit of the usual real analysis.}
$u \in C^{\infty}(\mathbb R^d) \cap L^{\infty}(\mathbb R^d)$ and $ -\Delta u =u $.  In particular for dimension $d=1$, we have
$u(x) = c_1 \cos x + c_2 \sin x$ for some constants $c_1$, $c_2$.
\end{theorem}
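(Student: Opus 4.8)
\section{Proof strategy for Theorem \ref{thm1}}

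The plan is to work entirely on the Fourier side and to exploit that the symbols $|\xi|^s-1$ and $|\xi|^2-1$ have exactly the same, and \emph{simple}, zero set, namely the unit sphere $\{|\xi|=1\}$. First I would translate the hypothesis $\Lambda^s u=u$ into a statement about $\hat u\in\mathcal S^{\prime}(\mathbb R^d)$. Using the definition \eqref{1.8a} together with $\widehat{\Lambda^s u}=\hat u$, for every $\phi\in\mathcal S(\mathbb R^d)$ supported away from the origin (so that $|\cdot|^s\phi\in\mathcal S$) one obtains $\langle\hat u,|\xi|^s\phi\rangle=\langle\hat u,\phi\rangle$, that is
\[
\langle \hat u,\,(|\xi|^s-1)\phi\rangle=0\quad\text{for all }\phi\in\mathcal S(\mathbb R^d)\text{ with }0\notin\operatorname{supp}\phi.
\]
Hence $(|\xi|^s-1)\hat u=0$ on the open set $\mathbb R^d\setminus\{0\}$, and since $|\xi|^s-1\neq 0$ whenever $|\xi|\neq 1$, it follows that $\operatorname{supp}\hat u\subseteq\{|\xi|=1\}\cup\{0\}$, a compact set. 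Any component of $\hat u$ carried by $\{0\}$ would make the corresponding part of $u$ a polynomial, which the bound $u\in L^{\infty}$ forces to be constant, and that constant is then killed by the equation (as $|\xi|^s$ vanishes at the origin for $s>0$); so in fact $\operatorname{supp}\hat u\subseteq\{|\xi|=1\}$.

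The heart of the argument is a multiplier observation. For $s>0$ the radial function $\Psi(\xi):=(|\xi|^2-1)/(|\xi|^s-1)$ extends to a nowhere-vanishing element of $C^{\infty}(\mathbb R^d\setminus\{0\})$: away from $|\xi|=1$ both numerator and denominator are smooth and nonzero, while across $|\xi|=1$ the common simple zero cancels (by Hadamard's lemma, or equivalently a Taylor expansion, the limiting value is $2/s\neq 0$). Consequently $|\xi|^2-1=\Psi(\xi)\,(|\xi|^s-1)$ as an identity of smooth functions on $\mathbb R^d\setminus\{0\}$, and multiplying the vanishing distribution $(|\xi|^s-1)\hat u$ by the smooth function $\Psi$ gives
\[
(|\xi|^2-1)\hat u=\Psi\cdot\big[(|\xi|^s-1)\hat u\big]=0\quad\text{on }\mathbb R^d\setminus\{0\}.
\]
Combined with the absence of any origin contribution this yields $(|\xi|^2-1)\hat u=0$ on all of $\mathbb R^d$, i.e.\ $-\Delta u=u$ in $\mathcal S^{\prime}(\mathbb R^d)$. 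I would stress that the entire mechanism rests on $|\xi|^s-1$ having a \emph{simple} zero on the sphere (guaranteed by $\partial_r|\xi|^s=s|\xi|^{s-1}\neq 0$ at $|\xi|=1$), matching the simple zero of $|\xi|^2-1$; this is precisely what makes the two equations equivalent for \emph{every} $s>0$.

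For the regularity claim, since $\hat u$ is a compactly supported distribution the Paley-Wiener-Schwartz theorem shows that $u=\mathcal F^{-1}\hat u$ is the restriction of an entire function, in particular $u\in C^{\infty}(\mathbb R^d)$; together with the hypothesis $u\in L^{\infty}$ this gives $u\in C^{\infty}(\mathbb R^d)\cap L^{\infty}(\mathbb R^d)$. Finally, for $d=1$ the sphere degenerates to the two points $\{\pm 1\}$, so $\hat u$ is a finite linear combination $\sum_j\big(a_j\delta^{(j)}(\xi-1)+b_j\delta^{(j)}(\xi+1)\big)$; inverting the transform writes $u$ as $e^{ix}P(x)+e^{-ix}Q(x)$ with $P,Q$ polynomials, and boundedness of $u$ forces $P,Q$ to be constant, whence $u(x)=c_1\cos x+c_2\sin x$.

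The step I expect to require the most care is the distributional bookkeeping near the origin: because $|\xi|^s$ is not smooth at $\xi=0$ when $s$ is not an even integer, the identities $(|\xi|^s-1)\hat u=0$ and the multiplier factorization must be carried out on $\mathbb R^d\setminus\{0\}$, and the possible concentration of $\hat u$ at the origin has to be excluded separately via the $L^{\infty}$ bound, as indicated above. Everything else is a routine consequence of the simple-zero structure and the Paley-Wiener-Schwartz theorem.
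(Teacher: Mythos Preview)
Your approach is sound and takes a genuinely different route from the paper. Near the sphere $|\xi|=1$ the paper expands $(1+\eta)^{2/s}-1=\sum_{\ell\ge1}c_\ell\eta^\ell$ with $\eta=|\xi|^s-1$ and uses $\langle u,\mathcal F^{-1}(\eta^\ell\chi_3\psi)\rangle=0$ term by term, whereas you encode the same cancellation in the single smooth multiplier $\Psi=(|\xi|^2-1)/(|\xi|^s-1)$; for regularity the paper shows $u=e^{k(\Delta+1)}u$ via the heat semigroup, while you invoke Paley--Wiener--Schwartz after establishing compact Fourier support. Your version is more conceptual and delivers $\operatorname{supp}\hat u\subset S^{d-1}$ up front; the paper's explicit decomposition (its Lemma~2.2, writing $\hat u=b_0+\sum_j(i\partial_{\xi_j})^{d+1}b_j$ with $b_j\in L^2\cap C_b$) instead gives a concrete handle on pairings involving $|\xi|^s$ near the origin. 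On that last point your write-up is a bit quick: ``$u\in L^\infty$ forces the polynomial to be constant'' needs the observation that the origin part is $\mathcal F^{-1}(\chi\hat u)=\check\chi*u\in L^\infty$ for a suitable cutoff $\chi$ (Paley--Wiener alone gives only polynomial growth of the sphere part, so you cannot directly subtract), and ``the constant is killed by the equation'' requires justifying $\Lambda^s c=0$ in the sense of \eqref{1.8a} despite $|\xi|^s\notin C^\infty$ at $0$---this is exactly where the paper deploys its decomposition and an $\varepsilon\to0$ limit (its proof of \eqref{2.14a}). You rightly flagged the origin as the delicate step; with these two patches your argument is complete.
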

In recent \cite{gmw2022}, Guan, Murugan and Wei proved the following results:
\begin{itemize}
\item If $0<s<2$, $d=1$, $u\in L^{\infty}(\mathbb R)$ satisfies $\Lambda^s u = u$,  then $u(x) =c_1
\cos x + c_2 \sin x$.
\item If $0<s~{\le}~2$, $d\ge 2$, $u \in C^{\infty}(\mathbb R^d) \cap L^{\infty}(\mathbb R^d)$ satisfies
$\Lambda^s u = u$ and $\lim\limits_{|x| \to \infty} u (x) =0$, then $-\Delta u = u$.
\item If $m\in\mathbb{N}$, $d\ge 2$, $u \in C^{\infty}(\mathbb R^d) \cap L^{\infty}(\mathbb R^d)$ satisfies
$(-\Delta)^m u = u$ if and only if $-\Delta u = u$.
\end{itemize}
The proof of \cite{gmw2022} relies on an extension formula of the fractional Laplacian operator.
Our theorem \ref{thm1} gives a unifying treatment for all $s >0$. Note that a somewhat pleasing feature of our proof is that we do not need to impose the extra decay assumption of $u$ at spatial infinity. To put things into perspective, we mention that the case  $d=1$ was first obtained by Fall and Weth in \cite{fw2016}. In the past decade there appears a rather extensive literature on the topic of fractional elliptic equations (cf. \cite{cs2015,cco2006,fw2016,fls2013} and the references therein).

{\begin{remark}
It is possible to classify further. For example consider $u \in L^{\infty}(\mathbb R^d)
\cap C^{\infty}(\mathbb R^d)$ solving the Helmholtz equation in dimension $d=2$:
\begin{align}
0=\Delta u + u= \partial_{rr} u + \frac 1 r \partial_r u +\frac 1 {r^2} \partial_{\theta\theta} u
+ u.
\end{align}
Denote $h_n(r) = \int_{0}^{2\pi} u(r, \theta) \cos n \theta d\theta$ (or
$h_n(r)=\int_0^{2\pi} u(r,\theta) \sin n \theta d\theta$) where $n\ge 0$ is an integer.
Clearly $h_n$ solves the equation
\begin{align}
r^2 \partial_{rr} h_n + r \partial_r h_n + (r^2-n^2) h_n=0.
\end{align}
The general solution is $h_n(r) = c_1 J_n(r) +c_2 Y_n(r)$ where $J_n$ and $Y_n$ are the standard
Bessel functions. Since $h_n$ is regular near $r=0$, we deduce $h_n(r) =c_1 J_n(r)$. Thus we have
\begin{align}
u = \sum_{n=0}^{\infty} J_n(r) ( a_n \cos n \theta+ b_n \sin n \theta), \qquad
\text{in $\mathcal S^{\prime}(\mathbb R^2)$}.
\end{align}
More precisely, for any $\phi \in \mathcal S(\mathbb R^2)$, we have
\begin{align}
\lim_{N\to \infty} \langle u - \sum_{n=0}^N J_n(r) (a_n \cos n\theta+b_n
\sin n \theta), \, \phi \rangle =0.
\end{align}
Similar statements also hold for dimensions $d\ge 3$. 
\end{remark}
}

\begin{remark}
One should note that for $u\in L^{\infty}(\mathbb R^d)$, we use the definition of $\Lambda^s
=(-\Delta)^{\frac s2}$ via
the formula:
\begin{align}
\langle \Lambda^s u, \phi \rangle = \langle u,  \Lambda^s \phi \rangle, \quad \forall\,
\phi \in \mathcal S(\mathbb R^d).
\end{align}
This formula is equivalent to \eqref{1.8a}.  Thanks to this characterization, it is pedestrian to show that
this definition of $\Lambda^s u$ coincides with the usual Molchanov-Ostrovskii  extension formula
\cite{mo1969}
(see also Muckenhoupt-Stein \cite{ms1965}).
For example, we consider the general formula
\begin{align}
\langle \Lambda^s_{\text{new}} f, \phi \rangle=
\lim_{\varepsilon \to 0} \langle L_{\varepsilon} f , \phi \rangle, \quad \forall\, \phi \in \mathcal S(\mathbb R^d);
\end{align}
where $L_{\varepsilon}$ is the extension/regularization operator for $\varepsilon>0$. In the case of
Molchanov-Ostrovskii extension or more general higher order extension, $L_{\varepsilon}$ is a benign
linear operator.  Since $f \in L^{\infty}(\mathbb R^d)$, for each $\varepsilon>0$ we have
\begin{align}
	\langle L_{\varepsilon} f, \phi \rangle = \langle f, L_{\varepsilon} \phi \rangle.
\end{align}
Thus as long as $L_{\varepsilon} \phi \to \Lambda^s \phi$ in $L^1(\mathbb{R}^d)$, we obtain
\begin{align}
	\Lambda^s_{\text{new} } f = \Lambda^s f.
\end{align}
\end{remark}



\subsection*{Notation} For any two quantities $X$ and $Y$, we write $X\lesssim Y$ if $X\le CY$ for some
harmless constant $C>0$.

\section{Proof of Theorem \ref{thm1}}
 We first introduce the following $d+1$ functions such that
\begin{equation}
\label{2.1}
\begin{cases}
\chi_0(x)\quad\mbox{is supported in}\quad \{|x|\le 10\},\\
\chi_j(x)\quad\mbox{is supported in}\quad \{x\in\mathbb{R}^d:~|x|\ge9,~|x_j|\ge \frac{1}{2\sqrt{d}}|x|\},\qquad j=1,\cdots,d,
\end{cases}
\end{equation}
and
\begin{equation}
\label{2.2}
\sum_{j=0}^d\chi_j(x)=1.
\end{equation}	
	
Next {for $u\in L^{\infty}(\mathbb R^d)$, }we define
\begin{equation}
\label{2.3}
b_0(\xi)=\int_{\mathbb{R}^d}u(x)\chi_0(x)e^{-ix\cdot\xi}dx\quad\mbox{and}\quad
b_j(\xi)=\int_{\mathbb{R}^d}\frac{u(x)\chi_j(x)}{x_j^{d+1}}e^{-ix\cdot\xi}dx,~j=1,\cdots,d.
\end{equation}	

\begin{lemma}
\label{le2.1}
{Let $b_0$ and $b_j,~j=1,\cdots,d$ be defined in \eqref{2.3}, then
\begin{equation}
\label{2.4}
b_j\in L^2(\R^d) \cap C_b(\mathbb R^d),\quad j=0,\cdots,d,
\end{equation}
where $C_b$ denotes the set of continuous bounded functions on $\mathbb R^d$. }
In addition, for $j=1,\cdots,d$, we have
\begin{equation}
\label{2.5}
|b_j(\xi)-b_j(0)|\lesssim |\xi| |\log |\xi| |\quad \text{ for}\quad |\xi| \le \frac 1 2.
\end{equation}
\end{lemma}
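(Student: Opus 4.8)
The plan is to treat the two assertions separately, and to observe at the outset that each $b_j$ is the Fourier transform of an explicit function: writing $g_0 = u\chi_0$ and $g_j(x) = u(x)\chi_j(x)/x_j^{d+1}$ for $j=1,\dots,d$, we have $b_j = \widehat{g_j}$. For the membership \eqref{2.4} it therefore suffices to show $g_j \in L^1(\mathbb{R}^d) \cap L^2(\mathbb{R}^d)$ for every $j$, after which $b_j \in C_b(\mathbb{R}^d)$ follows from $L^1$ (continuity by dominated convergence, boundedness by $|b_j(\xi)| \le \|g_j\|_{L^1}$) and $b_j \in L^2$ follows from $L^2$ by Plancherel. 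The function $g_0$ is bounded and supported in $\{|x|\le 10\}$, hence trivially in $L^1\cap L^2$. For $j\ge 1$ the point is the geometry of the support: on $\{|x|\ge 9,\ |x_j|\ge \tfrac{1}{2\sqrt d}|x|\}$ one has $|x_j|^{-(d+1)} \le (2\sqrt d)^{d+1}|x|^{-(d+1)}$, so that
\begin{align}
|g_j(x)| \lesssim |x|^{-(d+1)}\,\mathbf 1_{\{|x|\ge 9\}}.
\end{align}
Since $d+1>d$ and $2(d+1)>d$, the right-hand side lies in $L^1\cap L^2$ of the exterior region, giving \eqref{2.4}.

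For the Hölder-type bound \eqref{2.5} I would write, for $j=1,\dots,d$,
\begin{align}
b_j(\xi)-b_j(0) = \int_{\mathbb{R}^d} g_j(x)\,(e^{-ix\cdot\xi}-1)\,dx,
\end{align}
and exploit the two elementary estimates $|e^{-ix\cdot\xi}-1|\le 2$ and $|e^{-ix\cdot\xi}-1|\le |x\cdot\xi|\le |x||\xi|$. Splitting the domain at the scale $|x|=|\xi|^{-1}$ and using $|g_j(x)|\lesssim |x|^{-(d+1)}$ on $\{|x|\ge 9\}$, the inner region contributes (after passing to polar coordinates)
\begin{align}
|\xi|\int_{9\le |x|\le |\xi|^{-1}} |x|^{-d}\,dx \simeq |\xi|\int_{9}^{|\xi|^{-1}} \frac{dr}{r} \lesssim |\xi|\,\bigl|\log|\xi|\bigr|,
\end{align}
while the outer region contributes $\int_{|x|>|\xi|^{-1}} |x|^{-(d+1)}\,dx \simeq |\xi|$. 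Adding the two and absorbing the linear term into $|\xi|\,|\log|\xi||$ (legitimate since $|\log|\xi||\ge \log 2$ for $|\xi|\le\tfrac12$) yields \eqref{2.5}.

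The only genuinely delicate point is the logarithmic factor, and I expect it to be the crux rather than a mere technicality: it is forced by the critical decay rate $|x|^{-(d+1)}$, which after multiplication by the linearization gain $|x||\xi|$ produces the scale-invariant density $|x|^{-d}$ whose radial integral over the annulus $\{9\le |x|\le |\xi|^{-1}\}$ is exactly $\int dr/r \simeq |\log|\xi||$. This is precisely why the weight $x_j^{-(d+1)}$ in \eqref{2.3} is calibrated to the threshold of integrability. I would also record the boundary case $|\xi|^{-1}<9$, i.e.\ $\tfrac19<|\xi|\le\tfrac12$, where the inner annulus is empty and one estimates crudely using $|e^{-ix\cdot\xi}-1|\le 2$; here $|\xi|\,|\log|\xi||$ is bounded below by a positive constant, so the bound holds by the uniform boundedness of $b_j$.
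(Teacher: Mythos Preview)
Your proof is correct and follows essentially the same approach as the paper: the paper also records that $g_0$ and $g_j$ lie in $L^2$ (giving $b_j\in L^2$ by Plancherel), and for \eqref{2.5} performs the identical splitting of the integral at the scale $|x|\sim |\xi|^{-1}$, using $|e^{-ix\cdot\xi}-1|\le |x||\xi|$ on the inner part and $|e^{-ix\cdot\xi}-1|\le 2$ on the outer part to obtain $|\xi||\log|\xi||+|\xi|\lesssim |\xi||\log|\xi||$. Your write-up is in fact more complete, since you make explicit the $L^1$ membership of $g_j$ (needed for $b_j\in C_b$) and handle the boundary case $|\xi|^{-1}<9$, both of which the paper leaves tacit.
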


\begin{proof}
Since $u\in L^\infty(\R^d)$, clearly
\begin{equation}
\label{2.6}
u(x)\chi_0(x)\in L^2(\R^d)\quad\mbox{and}\quad\frac{u(x)\chi_j(x)}{x_j^{d+1}}\in L^2(\R^d),~j=1,\cdots,d.
\end{equation}
For $|\xi|\le \frac 12$, we have
\begin{equation}
\label{2.7}
\begin{aligned}
|b_j(\xi)-b_j(0)|\lesssim ~& \int_{|x|\leq\frac{2}{|\xi|}}\frac{|x||\xi|}{|x|^{d+1}}dx
+\int_{|x|>\frac{2}{|\xi|}}\frac{1}{|x|^{d+1}}dx\\	
\lesssim~ & |\xi||\log|\xi||+|\xi|\lesssim |\xi||\log|\xi||.
\end{aligned}
\end{equation}
\end{proof}
Thanks to the cut-off functions $\chi_j(x),~j=0,\cdots,d$, we have the following decomposition.
\begin{lemma}
\label{le2.2}
Let the dimension $d\ge 1$.
Suppose $u \in L^{\infty}(\mathbb R^d)$. Then
\begin{equation*}
\widehat{u} (\xi)= b_0(\xi) + \sum_{j=1}^d (i \partial_{\xi_j} )^{d+1} b_j (\xi)
	\quad  \text{in}\quad \mathcal S^{\prime} (\mathbb R^d).
\end{equation*}
\end{lemma}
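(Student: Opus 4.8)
The plan is to reduce the identity to the physical-side partition of unity \eqref{2.2} together with the elementary rule that multiplication by $x_j$ on the physical side corresponds to $i\partial_{\xi_j}$ on the Fourier side. Since \eqref{2.2} is a \emph{finite} sum, for $u \in L^{\infty}(\mathbb R^d) \subset \mathcal S'(\mathbb R^d)$ we may write $u = \sum_{j=0}^d u\chi_j$ as an identity in $\mathcal S'(\mathbb R^d)$, with no convergence issue. Because the Fourier transform is a continuous linear isomorphism of $\mathcal S'(\mathbb R^d)$, this yields $\widehat u = \sum_{j=0}^d \widehat{u\chi_j}$, so it remains only to identify each summand.

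The term $j=0$ is immediate: $u\chi_0$ is bounded with support in $\{|x|\le 10\}$, hence lies in $L^1(\mathbb R^d)$, so its distributional Fourier transform agrees with the absolutely convergent integral defining $b_0$ in \eqref{2.3}. For $j=1,\dots,d$ I would set $g_j(x) := u(x)\chi_j(x)/x_j^{d+1}$, so that $u\chi_j = x_j^{d+1} g_j$. On $\operatorname{supp}\chi_j$ one has $|x_j| \ge \frac{1}{2\sqrt d}|x| \ge \frac{9}{2\sqrt d}$, so $g_j$ has no singularity and obeys $|g_j(x)| \lesssim |x|^{-(d+1)}$; thus $g_j \in L^1(\mathbb R^d) \cap L^2(\mathbb R^d)$ and, by Lemma \ref{le2.1}, $\widehat{g_j} = b_j \in C_b(\mathbb R^d)$ is a genuine bounded continuous function.

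The heart of the matter is then the distributional identity
\begin{equation*}
\widehat{x_j^{d+1} g_j} = (i\partial_{\xi_j})^{d+1}\, \widehat{g_j} = (i\partial_{\xi_j})^{d+1} b_j
\qquad \text{in } \mathcal S'(\mathbb R^d).
\end{equation*}
At the formal level this is just the rule $\widehat{x_j g} = i\partial_{\xi_j}\widehat g$, which with the sign convention $\widehat g(\xi) = \int g(x) e^{-ix\cdot\xi}\,dx$ follows from $i\partial_{\xi_j} e^{-ix\cdot\xi} = x_j e^{-ix\cdot\xi}$, iterated $d+1$ times; note this fixes the coefficient as $i\partial_{\xi_j}$ rather than $-i\partial_{\xi_j}$, matching the statement. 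The main (and essentially only) point requiring care is that $x_j^{d+1} g_j$ is merely in $L^{\infty}$, not in $L^1$, so the identity must be read in $\mathcal S'$: for every $\phi \in \mathcal S(\mathbb R^d)$ I would unwind the definitions of the distributional Fourier transform and of distributional differentiation to move the polynomial multiplication and the $d+1$ derivatives onto the Schwartz function $\phi$, thereby reducing both sides to the same absolutely convergent integral paired against $g_j$. Since $g_j \in L^1(\mathbb R^d)$ this last interchange is a legitimate application of Fubini's theorem, and summing the contributions of $j=0,\dots,d$ yields the claimed decomposition.
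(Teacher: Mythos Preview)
Your argument is correct and is exactly the computation the paper has in mind; the paper's own proof of this lemma consists of the single word ``Obvious.'' You have simply spelled out the partition-of-unity decomposition and the standard distributional identity $\widehat{x_j^{d+1} g_j}=(i\partial_{\xi_j})^{d+1}\widehat{g_j}$ that underlie it.
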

\begin{proof}
Obvious.
\end{proof}

Thanks to Lemmas \ref{le2.1} and \ref{le2.2}, for $u\in L^{\infty}(\mathbb R^d)$ and $s>0$, one can define
$\Lambda^s u \in \mathcal S^{\prime}(\mathbb R^d)$ via the formula
\begin{align}
\label{2.8}
\widehat{\Lambda^s u}(\xi) =|\xi|^s \widehat u(\xi) =|\xi|^s  b_0(\xi)+ \sum_{j=1}^d |\xi|^s (i \partial_{\xi_j} )^{d+1} b_j(\xi).
\end{align}
In particular one can check that the following pairing
\begin{align}
\label{2.9}
\left\langle |\xi|^s (i\partial_{\xi_j} )^{d+1} b_j(\xi), \phi(\xi) \right\rangle = \left\langle b_j(\xi) - b_j(0), (i\partial_{\xi_j})^{d+1}(  |\xi|^s \phi (\xi) ) \right\rangle,\quad \forall \phi \in \mathcal S(\mathbb R^d).
\end{align}
Here in the above the $L^2$-pairing $\langle \cdot , \cdot \rangle$ is for the variable $\xi$. We spell out the explicit
argument $\xi$ to indicate this dependence.

To prove theorem \ref{thm1}, it suffices to show the following theorem.

\begin{theorem} \label{t2.1a}
Let $s>0$. Suppose  $u \in L^{\infty}(\mathbb R^d)$ satisfies
\begin{align}
\label{2.10}
\left\langle u, ~\mathcal F^{-1}\Bigl(  ( |\xi|^s-1) \phi(\xi)   \Bigr) \right\rangle =0, \qquad \forall\, \phi
\in \mathcal S(\mathbb R^d).
\end{align}
Then
\begin{align}
\label{2.11}
\left\langle u, \mathcal F^{-1}\Bigl(  ( |\xi|^2-1) \psi(\xi)   \Bigr) \right\rangle =0, \qquad \forall\, \psi \in \mathcal S(\mathbb R^d).
\end{align}
Also we have for any $k>0$,
\begin{align}
\label{2.12a}
\left\langle u, \mathcal F^{-1}\Bigl(  (e^{-k( |\xi|^2-1)} -1)  \psi(\xi)   \Bigr) \right\rangle =0, \qquad \forall\, \psi \in \mathcal S(\mathbb R^d).
\end{align}
In particular $u =e^{k\Delta +k} u$ in $\mathbb S^{\prime}(\mathbb R^d)$ and in $L^{\infty}(\mathbb R^d)$.
It follows that $u$ can be identified as  a $C^{\infty}(\mathbb R^d)$ function.

Furthermore, the tempered distribution $\widehat u$ is compactly supported. More precisely, we have
\begin{align} \label{2.13a}
\mathrm{supp}(\widehat{u} ) \subset K= \{ \xi:\; |\xi| =1 \}.
\end{align}
\end{theorem}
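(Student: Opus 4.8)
The plan is to pin down the location of $\widehat u$, from which every assertion of the theorem follows mechanically. By Plancherel the hypothesis \eqref{2.10} is equivalent to $\langle \widehat u,\,(|\xi|^s-1)\phi\rangle=0$ for all $\phi\in\mathcal S(\mathbb R^d)$, which says that the distribution $\widehat u$ is killed by every test function of the form $(|\xi|^s-1)\phi$. Since $|\xi|^s-1$ vanishes precisely on $K=\{|\xi|=1\}$, this should force $\mathrm{supp}(\widehat u)\subseteq K$. I would first dispose of the region away from the two bad sets $\{0\}$ and $K$: on any ball $U$ with $\overline U\cap(\{0\}\cup K)=\varnothing$ the function $|\xi|^s-1$ is smooth and nonvanishing, so for $\psi\in C_c^\infty(U)$ the quotient $\phi:=\psi/(|\xi|^s-1)$ lies in $C_c^\infty(U)\subseteq\mathcal S$ and $\langle\widehat u,\psi\rangle=\langle\widehat u,(|\xi|^s-1)\phi\rangle=0$. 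Hence $\widehat u=0$ there and $\mathrm{supp}(\widehat u)\subseteq K\cup\{0\}$.

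The main obstacle is the origin, and this is exactly why Lemmas \ref{le2.1}--\ref{le2.2} and the regularized pairing \eqref{2.9} were set up. Near $0$ the function $|\xi|^s-1$ equals $-1\ne0$, so the naive division above is unavailable because $\psi/(|\xi|^s-1)$ inherits the non-smooth factor $|\xi|^s$ and leaves $\mathcal S$. Instead I would argue by order: by Lemma \ref{le2.2} the distribution $\widehat u$ has order at most $d+1$, so its restriction near $0$ is $v_0=\sum_{|\alpha|\le d+1}c_\alpha\partial^\alpha\delta_0$, and the equation reads $|\xi|^s v_0=v_0$ in a neighborhood of the origin. The key point is that $s>0$ makes $|\xi|^s$ \emph{vanish} at $0$, so multiplication by $|\xi|^s$ strictly lowers the order of a distribution supported at a point; consequently $|\xi|^s v_0$ cannot reproduce the top-order part of $v_0$, which forces every $c_\alpha=0$ and hence $v_0=0$. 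The delicate step --- the one I expect to be hardest --- is to make the product $|\xi|^s\cdot\partial^\alpha\delta_0$ rigorous, since $|\xi|^s$ is not smooth at $0$; here one must use the definition \eqref{2.8}--\eqref{2.9}, whose singular integrals converge precisely because of the modulus of continuity $|b_j(\xi)-b_j(0)|\lesssim|\xi||\log|\xi||$ from \eqref{2.5} together with $s>0$. This yields \eqref{2.13a}, namely $\mathrm{supp}(\widehat u)\subseteq K$.

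Once the support is on $K$, the remaining conclusions are routine and, crucially, no longer touch the origin. Fix a cutoff $\zeta\in C_c^\infty$ equal to $1$ on a neighborhood of $K$ and supported in an annulus avoiding $0$. On that annulus both $|\xi|^2-1$ and $|\xi|^s-1$ have a simple (first-order) zero along $K$, so $g:=\frac{|\xi|^2-1}{|\xi|^s-1}$ extends smoothly across $K$; writing $(|\xi|^2-1)\psi=(|\xi|^2-1)\psi\zeta=(|\xi|^s-1)(g\psi\zeta)$ with $g\psi\zeta\in C_c^\infty$, the hypothesis gives \eqref{2.11}. The identical argument with $\frac{e^{-k(|\xi|^2-1)}-1}{|\xi|^s-1}$, again smooth near $K$, gives \eqref{2.12a}. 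Finally \eqref{2.12a} reads $\widehat u=e^{k(1-|\xi|^2)}\widehat u$, i.e. $u=e^{k}\,e^{k\Delta}u=e^{k}(G_k*u)$ with $G_k$ the Gaussian heat kernel; since $G_k\in\mathcal S$ and $u\in L^\infty$, the convolution is smooth and bounded, so $u\in C^\infty\cap L^\infty$, and \eqref{2.11} gives $|\xi|^2\widehat u=\widehat u$, that is $-\Delta u=u$ in the classical sense. (As a sanity check, \eqref{2.13a} can alternatively be recovered directly from \eqref{2.12a} by dividing a test function supported in $\{|\xi|\ne 1\}$ by $e^{k(1-|\xi|^2)}-1$ and letting $k\to\infty$, but this presupposes \eqref{2.12a}; the genuine difficulty is concentrated entirely at the origin.)
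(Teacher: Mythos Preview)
Your strategy---establish the support condition \eqref{2.13a} first and deduce everything else---is viable and genuinely different from the paper's route. The paper proceeds in the opposite order: it proves \eqref{2.11} directly by splitting $\psi$ into pieces localized near $0$, near $K$, and away from both, and only afterwards reads off \eqref{2.12a}, smoothness, and \eqref{2.13a} as easy corollaries. Near $K$ the paper does \emph{not} use your ratio trick $g=(|\xi|^2-1)/(|\xi|^s-1)$; instead it sets $\eta=|\xi|^s-1$, expands $(1+\eta)^{2/s}-1=\sum_{\ell\ge1}c_\ell\eta^\ell$, applies \eqref{2.10} to each power $\eta^\ell$, and checks convergence in $\mathcal S$. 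Your division by $g$ is cleaner here. Near the origin the paper also excises with a shrinking cutoff $\chi(\xi/\varepsilon)$ and reduces to showing $\langle u,\mathcal F^{-1}((|\xi|^2-|\xi|^s)\chi(\xi/\varepsilon)\tilde\chi)\rangle\to 0$ via Lemma~\ref{le2.2}; this is essentially the same analytic work as your delicate step, just packaged differently. The paper's ordering has the mild advantage that once \eqref{2.11} is in hand, \eqref{2.13a} is trivial because $|\xi|^2-1$ is globally smooth, so division is unobstructed everywhere off $K$; your ordering front-loads the origin difficulty.

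One caveat on your origin argument: the heuristic ``multiplication by $|\xi|^s$ strictly lowers the order of a point-supported distribution'' is not directly usable, since the product $|\xi|^s\cdot\partial^\alpha\delta_0$ is undefined for $|\alpha|>s$ (the derivatives of $|\xi|^s$ blow up at $0$), and the equation ``$|\xi|^s v_0=v_0$'' has no independent meaning. What actually works is a scaling computation: for $\phi\in C_c^\infty(B_{1/2})$ set $\phi_\varepsilon=\phi(\cdot/\varepsilon)$; the representation \eqref{2.8}--\eqref{2.9} together with \eqref{2.5} gives $\langle\widehat{\Lambda^s u},\phi_\varepsilon\rangle=O(\varepsilon^s|\log\varepsilon|)$, whereas $\langle v_0,\phi_\varepsilon\rangle$ contains terms of size $\varepsilon^{-|\alpha|}$ if some $c_\alpha\ne 0$. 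Comparing forces $v_0=0$. This is exactly the computation your proposal gestures at but does not carry out; once supplied, your argument is complete.
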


\begin{proof}
The key is to localize to the regime $||\xi|-1| \ll 1$.  Choose $\chi_1 \in C_c^{\infty}(\mathbb R^d)$ such that
\begin{align}
\chi_1(\xi)=
\begin{cases}
1, \quad \text{$ |\xi| \le 1-\delta_0$};\\
0, \quad \text{$ |\xi| \ge 1-\frac {\delta_0} 2 $}.
\end{cases}
\end{align}
Similarly choose $\chi_2 \in C_c^{\infty}(\mathbb R^d)$ such that
\begin{align}
\chi_2(\xi)=
\begin{cases}
1, \quad \text{$ |\xi| \le 1+\frac 12\delta_0$};\\
0, \quad \text{$ |\xi| \ge 1+\delta_0 $}.
\end{cases}
\end{align}
In the above, the constant $\delta_0>0$ will be taken sufficiently small.
	
We first claim that
\begin{equation}
\label{2.14a}
\left\langle u, \mathcal F^{-1}\Bigl(   (|\xi|^2-1) \chi_1(\xi) \psi (\xi)   \Bigr) \right\rangle =0.
\end{equation}
Indeed, we set $\tilde \chi(\xi) = \chi_1(\xi) \psi (\xi)$.
Note that $\tilde \chi \in C_c^{\infty}(|\xi| < 2)$.
Let $\chi \in C_c^{\infty}(|z| <1)$ be such that $\chi(z)=1$ for $|z|\le \frac 12$ and $\chi(z)=0$ for
$|z| \ge \frac 23$.  By \eqref{2.10}, we have for any $0<\varepsilon\ll 1$,
\begin{align*}
\left\langle u , \mathcal F^{-1} (  (|\xi|^2-1) (1- \chi\left(\frac {\xi} {\varepsilon} \right) ) \tilde \chi (\xi) ) \right\rangle =0;
\end{align*}
and
\begin{align*}
\left\langle u , \mathcal F^{-1} (  (|\xi|^s- 1)  \chi\left(\frac {\xi} {\varepsilon} \right) \tilde \chi (\xi) ) \right\rangle =0. \notag
\end{align*}
Thus we only need to show
\begin{align*}
\lim_{\varepsilon \to 0}
\left\langle u , \mathcal F^{-1} (  (|\xi|^2- |\xi|^s)  \chi\left(\frac {\xi} {\varepsilon} \right) \tilde \chi (\xi) ) \right\rangle =0.
\end{align*}
This last assertion follows from Lemma \ref{le2.2}.

It is not difficult to check that
\begin{equation}
\label{2.15}
\left\langle u, \mathcal F^{-1} \Bigl(  (|\xi|^2-1) (1-\chi_1(\xi) ) (1-\chi_2( \xi))  \psi (\xi) \Bigr) \right\rangle =0.
\end{equation}
Thus it remains to show (below $\chi_3(\xi) = (1-\chi_1(\xi) ) \chi_2(\xi)$, note that it is localized to $||\xi|-1| \ll 1$)
\begin{align}
\label{2.16}
\left\langle u, \mathcal F^{-1} \Bigl ( (|\xi|^2-1) \chi_3(\xi) \psi (\xi) \Bigr) \right\rangle =0.
\end{align}
Write $\eta =|\xi|^s -1$. Note that $|\xi|^2 -1 = (1+\eta)^{\frac 2s}-1$.  By \eqref{2.10}, we clearly have
\begin{align}
\label{2.17}		
\left\langle  u,   \mathcal F^{-1} \Bigl(  \eta^\ell  \chi_3(\xi) \psi (\xi ) \Bigr) \right\rangle =0, \qquad \forall\, \ell\ge 1.
\end{align}
A crucial fact is used here: thanks to the cut-off $\chi_3(\xi)$, the function $\chi_3(\xi) \eta^{\ell-1} \psi (\xi)
\in \mathcal S(\mathbb R^d)$ for any $\ell\ge 1$.
	
Since $(1+\eta)^{\frac 2s} -1 = \sum\limits_{\ell\ge 1} c_\ell \eta^\ell$ (the expansion converges for $|\eta| \ll 1$),  it is not difficult
to check that
\begin{align}
\label{2.18}		
\lim_{N\to \infty} \sum_{\ell=1}^N \left(c_\ell \eta^\ell \chi_3(\xi) \psi (\xi) \right)= ((1+\eta)^{\frac 2s}-1)
\chi_3(\xi) \psi(\xi) \quad\text{in}\quad \mathcal S(\mathbb R^d).
\end{align}
Clearly \eqref{2.16} follows.

Next, the identity \eqref{2.12a} readily follows from \eqref{2.11}, since  for $\psi \in \mathcal S(\mathbb R^d)$
\begin{align}
\Bigl( 1- e^{-k(|\xi|^2-1)} ) \psi = (|\xi|^2-1) \underbrace{ \int_0^k e^{- \theta (|\xi|^2-1) } d\theta \psi (\xi) }_{\in \mathcal
S(\mathbb R^d)}.
\end{align}
Note that strictly speaking we should write
$\int_0^k e^{- \theta (|\cdot |^2-1) } d\theta \psi (\cdot) \in \mathcal S(\mathbb R^d)$ but we chose to spell
out the explicit argument $\xi$ for notational visibility. By \eqref{2.12a}, we have $u= e^{k\Delta +k } u$ for any $k>0$. Thus $u$ can be identified as a $C^{\infty}$
function thanks to the smoothing heat semi-group. {For example, one can take $k=1$ and note that
\begin{align}
(e^{\Delta+1} u)(x) = (\rho * u)(x),
\end{align}
where $\rho>0$ is a Schwartz function, and $*$ denotes the usual convolution. Since $u\in L^{\infty}(\mathbb R^d)$,
we clearly have $\rho*u \in C^{\infty}$. }

Finally we turn to \eqref{2.13a}. It suffices for us to show
\begin{align}
\langle \widehat u, \; \phi \rangle =0, \qquad \forall\, \phi \in C_c^{\infty}(\mathbb R^d \setminus K).
\end{align}
Since $\phi \in C_c^{\infty}( \mathbb R^d \setminus K)$, we have $\phi_1=\frac {\phi(\cdot) }{|\cdot|^2-1}
\in C_c^{\infty}(\mathbb R^d \setminus K)$.  Clearly
\begin{align}
\langle \widehat u, \; (|\cdot|^2-1) \phi_1 \rangle =0
\Rightarrow \langle \widehat u, \; \phi \rangle=0.
\end{align}
\end{proof}

{\center{\bf Acknowledgement}.} The research of the third author is supported by NSFC Grants 11871470 and 12171456.

\end{document}